\newcommand{\IN}[1]{\index{#1|BH}#1}
\def\R{{\mathbb R}} 
\def\N{{\mathbb N}} 
\def\G{{\mathcal G}} 
\def\W{{\mathcal W}} 
\newtheorem*{lemma*}{Lemma}
\theoremstyle{plain}
\newtheorem{theorem}{Theorem}[section]
\newtheorem{proposition}[theorem]{Proposition}
\newtheorem{lemma}[theorem]{Lemma}
\theoremstyle{definition}
\newtheorem{definition}[theorem]{Definition}
\theoremstyle{remark}
\numberwithin{subcase}{case}
\begin{document}
\begin{frontmatter}

\title{A Partial Characterization of Robinsonian $L^p$ Graphons}

\author{Teddy Mishura}
\ead{tmishura@torontomu.ca}
\address{Department of Mathematics, Kerr Hall South, Toronto Metropolitan University in Toronto, ON}

\begin{abstract}
We present a characterization of Robinsonian $L^p$ graphons for $p > 5$. Each $L^p$ graphon $w$ is the limit object of a sequence of edge density-normalized simple graphs $\{G_n/\|G_n\|_1\}$ under the cut distance $\delta_{\Box}$. A graphon $w$ is Robinson if it satisfies the Robinson property: if $x\leq y\leq z$, then $w(x,z)\leq \min\{w(x,y),w(y,z)\}$, and it is Robinsonian if $\delta_{\Box}(w,u)=0$ for some Robinson $u$. In previous work, the author and collaborators introduced a graphon parameter $\Lambda$ that recognizes the Robinson property, where $\Lambda(w) = 0$ precisely when $w$ is Robinson. Using functional analytic arguments, we show here that for $p > 5$, the Robinsonian $L^p$ graphons $w$ are precisely those that are the cut distance limit object of graphs $G_n$ such that $\Lambda(G_n/\|G_n\|_1) \to 0$. 
\end{abstract}

\begin{keyword}
graphons \sep Robinson property \sep weak$^*$ convergence \sep cut norm
\MSC[2020] 47B47 \sep 05C50 \sep 54C08
\end{keyword}
\end{frontmatter}
\section{Introduction}
The analysis of structure in large networks has been an important question since the beginnings of computer science---many real world data structures are modeled as vast, complex graphs, and determining both local and global behavior is more desirable now than ever. In particular, many physical networks are dynamic in nature, growing in size and structure as new data is collected; equally as important in such changing systems is recognizing and characterizing emergent properties such as clustering or connectedness. The language of graph limit theory offers a powerful method to attack these problems in the form of $L^p$ \textit{graphons}, symmetric measurable functions from $[0,1]^2 \to \R$ with finite $p$-norm $\|\cdot\|_p$. 

First introduced by Lov\'asz and Szegedy \cite{LOVASZ2006933}, graphons are symmetric measurable functions from $[0,1]^2 \to [0,1]$, and can be viewed as the limit objects of sequences of graphs under the \textit{cut norm} $\|\cdot\|_{\Box}$, where
$$\|w\|_{\square}=\sup_{A,B \subseteq [0,1]}\Bigg|\iint_{A\times B} w(x,y)dxdy\Bigg|.$$ 
For any labeled graph $G$, the adjacency matrix $A_G$ can be inserted into the unit square as a $\{0,1\}$--valued step function $w_G$; graphons form the completion of these functions under $\|\cdot\|_{\Box},$ and we denote the space of graphons by $\W_0$. Given an \textit{unlabeled} graph $H$, where the adjacency matrix is ill defined, we compare its distance to a graphon using the \textit{cut distance} $\delta_{\Box}$---the minimum difference in cut norm over all labelings of $H$. This embedding of the adjacency matrix forces sequences of graphs $\{G_n\}$ with vanishing edge density to converge to the zero graphon, yielding a trivial result for many classes of networks, such as any graph family whose edges are governed by a power law. Borgs et~al. resolved this issue in \cite{Borgs_2018,Borgs_2019} by instead considering the convergence of the graph sequence $\{G_n\}$ normalized by its edge density $\{G_n / \|G_n\|_1\}$. The limit functions of these sequences need only be bounded in $p$-norm and are thus commonly known as $L^p$ \textit{graphons}, whose space is denoted $\W^p.$

The main attraction of graph limit theory is the ability to answer combinatorial problems using analytic tools. Thus, when considering the problem of recognizing emergent structure and behaviors in a collection of large networks $\mathcal{G}$, it is tempting to find an $L^p$ graphon $w$ that is extremely close to $\mathcal{G}$ in cut norm and then test $w$ for the desired structure or behavior. While this approach is often quite powerful, it is not always the case that a combinatorial property of graphs translates to graphons (see for example \cite{HLADKY2020103108}).  Thus, recognizing and characterizing the properties that do extend from graphs to graphons is very useful for the analysis of growing networks, as determining the underlying graphon of a collection of networks can give insight into the collection as a whole. 

We study here the Robinson property: an $L^p$ graphon $w$ is Robinson if for all $x \leq y \leq z$, then
\begin{equation*}
    w(x,z) \leq \min(w(x,y),w(y,z)).
\end{equation*}
These functions were defined as \textit{diagonally increasing} in \cite{Chuangpishit_2015} because they increase toward the main diagonal along a horizontal or vertical line, and are generalizations of Robinson matrices.  Sometimes called R-matrices, Robinson matrices appear in the classic problem of seriation (see \cite{Liiv_2010} for a comprehensive review of seriation) and their study is a field of much interest \cite{chepoi2009,flammarion2016optimal,fogel2014,laurent2017}. Given a sequence of graphs $\{G_n\}$, can one recognize if they are sampled from a Robinson graphon? It is simpler to tackle the labeled case first and then extend to the unlabeled case, so the problem becomes the following: Construct a function $\Phi : \W^p \to [0,\infty)$ that \textit{measures} the Robinson property of a graphon. Such functions are called \textit{graphon parameters}, and can be chosen to recognize desired structure in $w$. A graphon parameter $\Phi$ is suitable for Robinson measurement if it is subadditive and satisfies the following three properties:
\begin{itemize}
    \item\textbf{(Recognition)} $\Phi(w) = 0$ if and only if $w$ is Robinson a.e.

    \item\textbf{(Continuity)} $\Phi$ is continuous with respect to the cut norm.

    \item\textbf{(Stability)} Given a graphon $w$, there exists a Robinson graphon $u$ such that $\|w-u\|_{\Box} \leq f(\Phi(w))$, where $f:[0,\infty) \to [0,\infty)$ is a nondecreasing function with $\lim_{x \to 0^+} f(x) = 0.$ 
\end{itemize}

Provided a sequence of associated graphons $\{w_{G_n}\}$ and a graphon $w \in \W^p$ such that $\|w_{G_n}-w\|_{\Box}\to0$, recognition and stability combined with subadditivity guarantee that the Robinson graphon $u$ is close in cut norm to each $w_{G_n}$ by some measure of how Robinson $w$ is, while continuity ensures that the if the $\{w_{G_n}\}$ are Robinson, then so is $w$. Thus, any such $\Phi$ will be able to recognize when a sequence of \textit{labeled} graphs is sampled from a Robinson graphon or a graphon that is \textit{almost} Robinson (see \cite{Ghandehari2023RobustRO}). However, extending these results to \textit{unlabeled} graphs is both difficult and dependent on the density of the graph sequence in question. 

In this paper, we resolve this problem for unlabeled graphs that converge to $L^p$ graphons for $p>5$ in terms of the graphon parameter $\Lambda$ defined in \cite{Ghandehari2023RobustRO}. All graphons in $\W^p$ for $p > 5$ are ``$\Lambda$-close'' to certain Robinson graphons in the cut norm---thus, if a graph sequence $\{G_n\}$ whose associated graphons $\{w_{G_n}\}$ grow increasingly Robinson converges to a graphon $w$ in cut distance, there must exist a sequence of Robinson graphons that converge to $w$ in cut distance. Through a combination of the specific properties of these Robinson approximations and functional analytic arguments, we showed that in this case there must exist some Robinson graphon $u$ such that $\delta_{\Box}(u/\|u\|_1,w/\|w\|_1) = 0$. Therefore, $w$ is Robinsonian precisely when it is the limit object of a sequence of simple graphs $\{G_n\}$ such that $\Lambda(G_n/\|G_n\|_1) \to 0$. 

\section{Notation and background}\label{sec:notationbg}
In this section, we define basic notation used throughout the paper; afterwards, we present graph limit theory, stating required definitions alongside notable results, then transition into explanation of the Robinson property for both matrices and graphons.

All functions and sets discussed during this paper are assumed to be measurable. The Lebesgue measure of a set $A \subseteq \R$ is denoted by $|A|$. The notation $\|\cdot\|_p$ is used to represent the standard $p$-norm; that is, for a function $f:X \to \R$ and $1 \leq p < \infty$,
\begin{equation*}
    \|f\|_p := \left(\int_X |f|^p\right)^{\frac{1}{p}} \mbox{ and } \|f\|_{\infty} := \inf_{a \in \R}\{f(x) \leq a \mbox{ a.e.}\}.
\end{equation*}
Let $X$ be a normed vector space with inner product $\langle\cdot,\cdot\rangle,$ and let $X^* = \{\phi:X \to \R~|~\phi \text{ continuous}\}$ be the \textit{dual} of $X.$ If $\{\mu_n\}_{n \geq 1} \subset X^*$ and $\mu \in X^*$, we say that $\mu_n$ converges \textit{weak}$^*$ to $\mu$, writing $\mu_n \to \mu$, if for all $x \in X$, we have that
\begin{equation*}
    \langle x,\mu_n \rangle \to \langle x,\mu\rangle.
\end{equation*}
In particular, the function space $L^p[0,1]^2$ has $L^q[0,1]^2$ as its dual, where $q = p/(p-1)$. Thus, for $f_n$ to converge weak$^*$ to $f$ in $L^q[0,1]^2$, for all $g \in L^p[0,1]^2$, it must be true that
\begin{equation*}
    \left(\iint_{[0,1]^2}|f_n|^p|g|^pdxdy\right)^{\frac{1}{p}} \to \left(\iint_{[0,1]^2}|f|^p|g|^pdxdy\right)^{\frac{1}{p}}.
\end{equation*}
\subsection{Graph limit theory}
A \textit{graph} $G = (V,E)$ is a collection of vertices $V(G)$ and edges $E(G)$. In this paper, we will only consider \textit{simple} graphs: those with neither loops nor multiple edges between vertices. We also draw a distinction between \textit{labeled} and \textit{unlabeled} graphs, where labeled graphs $G$ have the vertex set $V(G) = \{1,\ldots,|V(G)|\}$. Any unlabeled graph $H$ can be labeled by uniquely assigning elements from $\{1,\ldots,|V(H)|\}$ to $V(H)$. 

Every labeled graph $G$ has an \textit{adjacency matrix} $A_G$ with $(A_G)_{ij} = 1$ if $i \sim j$ and 0 otherwise, where $i \sim j$ denotes that vertices $i$ and $j$ are connected by an edge. Furthermore, $A_G$ can be embedded into $[0,1]^2$ as a step function $w_G$ as follows: Divide $[0,1]$ into $n$ intervals $I_1,...,I_n$ of equal measure, and for $x \in I_i$ and $y \in I_j$, let $w_G(x,y) = (A_G)_{ij}$. Importantly, we note that both $A_G$ and $w_G$ are dependent on the labeling of $V(G)$: see Figure \ref{fig:weightedgraph} for an example. Unless otherwise noted, we always consider a labeled graph $G$ the same as its associated step function $w_G$. These functions $w_G$ are members of a broader class of functions known as \textit{graphons}, symmetric functions $w: [0,1]^2 \to [0,1]$. The space of graphons is denoted $\W_0$. 
\begin{figure}[t]
    \centering
    \includegraphics[width=0.7\textwidth]{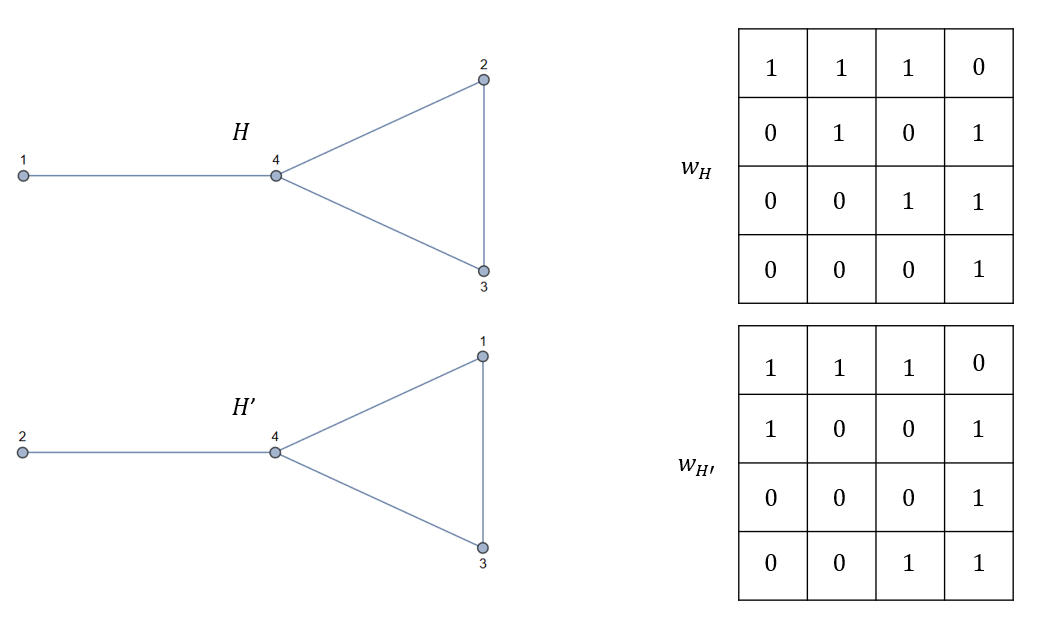}
    \caption{Two isomorphic labeled graphs $H$ and $H'$ with different associated \IN{step function}s. Note that $w_H$ and $w_{H'}$ are symmetric as functions on $[0,1]^2,$ not as matrices.}
    \label{fig:weightedgraph}
\end{figure}
The foundational idea of graph limit theory is that a sequence of graphs $\{G_n\}$ can converge to a graphon $w$. We shall view this entirely through an analytic framework, where we consider the convergence of the functions $\{w_{G_n}\}$ to the function $w$ in the famous \textit{cut norm} $\|\cdot\|_{\Box}$, first defined by Frieze and Kannan \cite{friezekannan}. The cut norm of $w:[0,1]^2 \to \R$ is given by
\begin{equation*}
    \|w\|_{\Box} = \sup_{S,T \subseteq [0,1]}\Bigg|\iint_{S \times T}w(x,y)dxdy\Bigg|.
\end{equation*}
For a labeled graph $G$, we define $\|G\|_{\Box} := \|w_G\|_{\Box}$. A sequence of labeled graphs $\{G_n\}$ \textit{converges} to a graphon $w$ if $\|w_{G_n}-w\|_{\Box}\to0$.

For a sequence of unlabeled graphs $\{H_n\}$ to converge to a graphon $w$, something more is needed: $\|H_n-w\|_{\Box}$ is only defined if $H_n$ is labeled. Thus, the natural extension is to find the minimum cut norm difference over all possible labelings of $H_n$. The \emph{cut distance} generalizes this idea to functions on $[0,1]^2$, where it is now the set $[0,1]$ that must be relabeled. Let $\Phi$ be the set of all bijections $\varphi:[0,1]\to[0,1]$ such that if $A \subset [0,1]$, then $|\varphi(A)| = |A|$. We call such bijections \textit{measure preserving}. The cut distance between two functions $u, w:[0,1]^2\to \R$ is given by
\begin{equation*}
    \delta_{\square}(u,w) = \inf_{\varphi \in \Phi}\|u-w^{\varphi}\|_{\square},
\end{equation*}
where $w^{\varphi}(x,y) = w(\varphi(x),\varphi(y)).$ However, the distance $\delta_{\square}$ is only a pseudometric---two nonidentical graphons can have cut distance 0. We fix this by identifying graphons of cut distance 0 from each other to get the set $\widetilde{\W}_0$ of \textit{unlabeled graphons}. It is a classic result of graph limit theory that the metric space $(\widetilde{\W}_0,\delta_{\Box})$ is compact. Unless otherwise stated, the convergence of a graph sequence $\{G_n\}$ refers to the convergence of the unlabeled graphons $\{w_{G_n}\}$ in $\delta_{\Box}$. Furthermore, it is clear that for any graphon $w \in \W_0$, 
\begin{equation*}
    \|w\|_{\square} \leq \|w\|_{1} \leq \|w\|_{p} \leq \|w\|_{\infty}.
\end{equation*}

Thus, if $\|w_{G_n}\|_1 \to 0$ for some sequence of graphs $\{G_n\}$, that sequence must trivially converge to 0. These sequences are called \textit{sparse}; graph sequences that are not sparse are called \textit{dense}. We note that $\|w_{G}\|_1 = |E(G)|/|V(G)|^2$ is the \textit{edge density} of a graph, and so sparse graph sequences are exactly those with a subquadratic number of edges. Such graph sequences were shown in \cite{Borgs_2018} to have nontrivial limits when normalized by their edge density---or, for the associated \IN{graphon}s, normalized by their $L^1$ norm. Thus, for sparse graph sequences $\{G_n\}$ to converge, there must be some limit graphon $w$ such that
\begin{equation*}
\left\|\frac{w_{G_n}}{\|w_{G_n}\|_1}-w\right\|_{\Box} \to 0 ~\text{ or }~ \delta_{\Box}\left(\frac{w_{G_n}}{\|w_{G_n}\|_1},w\right) \to 0
\end{equation*}
as opposed to $\{w_{G_n}\}$. However, the limit objects of such sequences must be unbounded, and so we introduce $L^p$ \textit{graphons}: symmetric, measurable functions $w \in L^p([0,1]^2)$. We refer to the space of \IN{$L^p$ graphon}s as $\W^p$, noting that $\W_0 \subset \W^{\infty} \subseteq \W^p$ for $p \geq 1$. This generalization of the notion of a graphon still brings with it powerful results from the dense theory: The metric space $(\widetilde{\W}^p,\delta_{\Box})$ has a compact unit ball.

As every graph can construct a graphon, so can every graphon construct a graph; \textit{sampling} from a graphon refers to creating one of these random graphs. Given a \IN{graphon} $w$ and a set $S = \{x_1,...,x_n\}$ where each $x_i \in [0,1]$, create a random simple \IN{graph} $\G(n,w)$ on $V(G) = \{1,\ldots,n\}$ as follows: Connect vertices $i$ and $j$ with probability $w(x_i,x_j)$, making independent decisions for distinct pairs $(i,j)$, where $i,j \in [n]$ and $i \neq j$. We refer to $\G(n,w)$ as a \textit{w-random \IN{graph}}. Importantly, for large enough samples, the \IN{cut distance} between a \IN{graphon} and a sample is small with high probability. 

It is often important to consider the average value of graphons over sets of the form $A \times B$. Let $w \in \W^1$ and let $A,B \subseteq [0,1]$. The \textit{\IN{cell average}} of $w$ over $A \times B$ is given by 
\begin{equation*}
\overline{w}(A\times B) = \frac{1}{|A \times B|}\iint_{A\times B}w~dxdy.
\end{equation*}
Given a fixed $L^p$ graphon $w \in \W^p$ and a partition $\mathcal{P}=\{S_1,\ldots,S_k\}$ of $[0,1]$ into nonempty measurable sets each of measure greater than 0, we define the step function $w_{\mathcal{P}}$ by
\begin{equation*}
    w_{\mathcal{P}}(x,y) = \frac{1}{|S_i\times S_j|}\iint_{S_i \times S_j}w(x,y)dxdy = \overline{w}(S_i\times S_j)\quad (x \in S_i,y \in S_j),
\end{equation*}
where the operator $w \mapsto w_{\mathcal{P}}$ is called the \textit{\IN{stepping operator}}. The \IN{stepping operator} does not increase either the $p$-norm or cut norm \cite{Borgs_2019} of any graphon $w$ it is applied to; indeed, if $w \in \W^1$, $\mathcal{P}$ is any partition of $[0,1]$ into a finite number of nonempty measurable sets, and $p \geq 1$, then
\begin{equation*}
    \|w_{\mathcal{P}}\|_p \leq \|w\|_p ~\text{ and } ~\|w_{\mathcal{P}}\|_{\square} \leq \|w\|_{\square}.
\end{equation*}
We note that this property is sometimes referred to as being \textit{contractive}.


\subsection{The Robinson property}
The Robinson property for matrices was first introduced in the study of the classical seriation problem \cite{robinson_1951}, whose objective is to order a set of items so that similar items are placed close to one another. A symmetric \IN{matrix} $A=[a_{ij}]$ is \emph{Robinson} if
\begin{equation*}
   i \leq j \leq k \implies a_{ik} \leq \min\{a_{ij},a_{jk}\},
\end{equation*}  
and is \textit{Robinsonian} if it becomes a Robinson \IN{matrix} after simultaneous application of a permutation $\pi$ to its rows and columns. In that case, the permutation $\pi$ is called a \textit{Robinson ordering} of $A$. Graphs $G$ with Robinsonian adjacency matrices $A_G$ are the well known \textit{unit interval graphs}, sometimes also referred to as $1$--\textit{dimensional geometric graphs}. A graph $G$ is a unit interval graph if and only if there exists an ordering $\prec$ on $V(G)$ such that 
\begin{equation*}
    \forall v,z,w \in V(G),~ v\prec z \prec w \text{ and } v \sim w \implies z\sim v \text{ and } z\sim w.
\end{equation*}
This ordering $\prec$ is precisely the Robinson ordering of $A_G$. Such Robinsonian matrices that are $\{0,1\}$-valued are also studied under the name of the ``consecutive 1s property'' (see \cite{fulkerson65}).

When measuring large quantities of matrices or graphs for the Robinson property, such as in quality analysis of data, it is often useful to determine if these objects can be viewed as samples of some process $w$. Were this the case, only the Robinson property of the process $w$ would need to be analyzed for quantitative results concerning the data as a whole. A graphon $w \in \W^p$ is said to be \emph{Robinson} if 
\begin{equation*}
   x \leq y \leq z \implies w(x,z) \leq \min\{w(x,y),w(y,z)\}.
\end{equation*}  
Robinson graphons were first considered in \cite{Chuangpishit_2015} under the name \textit{diagonally increasing graphons}. We call a \IN{graphon} \textit{Robinson almost everywhere}, or Robinson a.e.~for short, if it is equal a.e.~to a Robinson \IN{graphon}. A \IN{graphon} $w \in \W^p$ is called \textit{Robinsonian} if there exists a Robinson $L^p$ \IN{graphon} $u \in \W^p$ such that $\delta_{\Box}(w,u) = 0$. This natural extension of the Robinson property to the world of graphons results in the addition of significant complexity; thus, much effort has been focused on measuring and approximating the Robinson property---if a graphon is \textit{close enough} to being Robinson, its samples should share the same behavior.

\subsection{Sampling from Robinson graphons}
In \cite{Chuangpishit_2015}, the authors introduced a (labeled) graphon parameter \IN{$\Gamma$} to estimate the Robinsonicity of a given graphon. We recall its definition below.
\begin{definition}[The parameter \IN{$\Gamma$}]\label{def:Gamma}
For $w \in \W_0$ and a measurable subset $A$ of $[0,1]$, let
\begin{equation*}
    \Gamma(w,A) := \iint_{y<z}\left[\int_{x \in A \cap [0,y]} (w(x,z)-w(x,y))dx\right]_+dydz
    + \iint_{y<z}\left[\int_{x \in A \cap [z,1]} (w(x,y)-w(x,z))dx\right]_+dydz,
\end{equation*}
with $[x]_+ = \max(x,0)$. Moreover, let $\Gamma(w) := \sup_{A \in \mathcal{A}}\Gamma(w,A),$ where the supremum is taken over all  measurable subsets of $[0,1]$. If $G$ is a simple, labeled graph, then we define $\Gamma(G) := \Gamma(w_{G})$.
\end{definition}
It was shown in \cite{Chuangpishit_2015,ghandehari2023robust} that $\Gamma$ is suitable for Robinson measurement of graphons; that is, it possesses recognition, continuity, and stability. Thus, as \IN{$\Gamma$} is continuous with respect to \IN{graphon}s in the \IN{cut norm}, we hope that we can pass that continuity down to \IN{graph} sequences. Namely, given a convergent sequence of dense graphs $\{G_n\}$ and a \IN{graphon} $w \in \W_0$ where $\delta_{\Box}(G_n,w) \to 0$, we desire a result of the form $\Gamma(G_n) \to \Gamma(w)$. The authors showed a form of this independent of vertex labeling, which we recall below. 
\begin{proposition}[Corollary 6.5, \cite{Chuangpishit_2015}]
Let $\{G_n\}$ be a sequence of simple graphs with $|V(G_n)| \to \infty$ and let $w\in \W_0$ such that $\delta_{\Box}(G_n,w) \to 0$. If $\Phi_n$ is the set of all vertex labelings of $G_n$ and $\Phi$ is the set of all measure preserving bijections of $[0,1]$, then
\begin{equation*}
    \min_{\phi_n \in \Phi_n}\Gamma(G_n^{\phi_n}) \to \inf_{\psi \in \Phi}\Gamma(w^\psi).
\end{equation*}
\end{proposition}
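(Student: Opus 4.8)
The plan is to lift the problem from graphs to unlabeled graphons via the symmetrized parameter $\widehat{\Gamma}(u) := \inf_{\psi \in \Phi}\Gamma(u^{\psi})$: I would prove that $\widehat{\Gamma}$ is continuous on $(\widetilde{\W}_0,\delta_{\Box})$ and then show that $\min_{\phi_n \in \Phi_n}\Gamma(G_n^{\phi_n})$ differs from $\widehat{\Gamma}(w_{G_n})$ by a quantity that vanishes as $|V(G_n)| \to \infty$. I will use two elementary facts throughout: the cut norm is invariant under measure preserving bijections, so $\|f^{\psi}\|_{\Box} = \|f\|_{\Box}$ for $\psi \in \Phi$; and the continuity property of $\Gamma$ in the form of a uniform modulus on $\W_0$, $|\Gamma(u) - \Gamma(v)| \le \omega(\|u-v\|_{\Box})$ for a nondecreasing $\omega$ with $\omega(0^+) = 0$.

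First I would verify that $\widehat{\Gamma}$ is well defined on $\widetilde{\W}_0$, since $\{(u^{\varphi})^{\psi} : \psi \in \Phi\} = \{u^{\psi} : \psi \in \Phi\}$, and that it is $\delta_{\Box}$-continuous: given $u,v$ and $\epsilon>0$, pick $\varphi \in \Phi$ with $\|u - v^{\varphi}\|_{\Box} < \delta_{\Box}(u,v) + \epsilon$ and $\psi \in \Phi$ with $\Gamma((v^{\varphi})^{\psi}) < \widehat{\Gamma}(v) + \epsilon$; then $\|u^{\psi} - (v^{\varphi})^{\psi}\|_{\Box} = \|(u - v^{\varphi})^{\psi}\|_{\Box} = \|u - v^{\varphi}\|_{\Box}$, so $\widehat{\Gamma}(u) \le \Gamma(u^{\psi}) \le \widehat{\Gamma}(v) + \omega(\delta_{\Box}(u,v)+\epsilon) + \epsilon$, and symmetrizing and sending $\epsilon \to 0$ gives $|\widehat{\Gamma}(u) - \widehat{\Gamma}(v)| \le \omega(\delta_{\Box}(u,v))$. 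Since $\delta_{\Box}(w_{G_n},w) \to 0$, this already yields $\widehat{\Gamma}(w_{G_n}) \to \widehat{\Gamma}(w) = \inf_{\psi \in \Phi}\Gamma(w^{\psi})$.

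Next I would relate $\min_{\phi_n}\Gamma(G_n^{\phi_n})$ to $\widehat{\Gamma}(w_{G_n})$. Each relabeling $\phi_n$ of $G_n$ realizes $w_{G_n^{\phi_n}} = (w_{G_n})^{\tau}$ for the block permutation $\tau \in \Phi$ that permutes the equal intervals $I_1,\dots,I_n$ according to $\phi_n$; hence $\min_{\phi_n}\Gamma(G_n^{\phi_n})$ is an infimum over a subset of $\{(w_{G_n})^{\psi}\}_{\psi \in \Phi}$, so $\min_{\phi_n}\Gamma(G_n^{\phi_n}) \ge \widehat{\Gamma}(w_{G_n})$, which with the previous paragraph gives $\liminf_n \min_{\phi_n}\Gamma(G_n^{\phi_n}) \ge \inf_{\psi}\Gamma(w^{\psi})$. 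For the matching upper bound, choose $\sigma_n \in \Phi$ with $\Gamma((w_{G_n})^{\sigma_n}) < \widehat{\Gamma}(w_{G_n}) + 1/n$ and round $\sigma_n$ to a permutation $\pi_n$ of $I_1,\dots,I_n$ so that $\varepsilon_n := \|(w_{G_n})^{\sigma_n} - (w_{G_n})^{\pi_n}\|_{\Box} \to 0$; since $(w_{G_n})^{\pi_n} = w_{G_n^{\phi_n}}$ for the associated $\phi_n$, continuity of $\Gamma$ gives $\min_{\phi_n}\Gamma(G_n^{\phi_n}) \le \Gamma((w_{G_n})^{\sigma_n}) + \omega(\varepsilon_n) \le \widehat{\Gamma}(w_{G_n}) + 1/n + \omega(\varepsilon_n)$, whence $\limsup_n \min_{\phi_n}\Gamma(G_n^{\phi_n}) \le \inf_{\psi}\Gamma(w^{\psi})$. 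Combining the two bounds proves the statement.

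The main obstacle is the rounding claim: uniformly in $n$, an arbitrary $\sigma_n \in \Phi$ must be replaceable by one of the $n!$ permutations of the equal intervals $I_1,\dots,I_n$ at a cut-norm cost tending to $0$. I would handle it by first approximating $(w_{G_n})^{\sigma_n}$ in cut norm, to within $\epsilon$, by a step function on a bounded number $N$ of parts (weak regularity), then passing to a common refinement of that $N$-part partition with $\{I_i\}$ and rounding the bijection via a Birkhoff--von Neumann decomposition of the associated doubly stochastic overlay matrix together with an averaging argument, which costs $O(N/n)$, and finally letting $\epsilon \to 0$; this is the mechanism underlying the classical fact that the cut distance between an $n$-vertex graph and a graphon is, up to $o(1)$, attained at a vertex relabeling. (One could instead transfer a near-optimal rearrangement of $w$ to $G_n$ through $w$-random samples $\G(k,w)$.)
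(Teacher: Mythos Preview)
The paper does not contain a proof of this proposition; it is simply quoted as Corollary~6.5 of \cite{Chuangpishit_2015} and used as background for the later discussion of $\Gamma$ and $\Lambda$. There is therefore no argument in the present paper to compare your attempt against.

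Your strategy is nonetheless the natural one and is essentially correct. The passage to the symmetrized parameter $\widehat\Gamma$ and its $\delta_\Box$-continuity are routine, and you have correctly located the only real difficulty in the upper bound: replacing an arbitrary $\sigma_n\in\Phi$ by a block permutation of $I_1,\dots,I_n$ at vanishing cut-norm cost. The weak-regularity plus Birkhoff--von~Neumann rounding you sketch is indeed the standard device for this (it is how one shows that $\delta_\Box$ between an $n$-vertex graph and a graphon is, up to $o(1)$, realized by a vertex relabeling), and the sampling alternative you mention also works. One point to tighten: your argument relies on a \emph{uniform} modulus $\omega$ for $\Gamma$ on $\W_0$, whereas the paper only records that $\Gamma$ is cut-norm continuous; you would need to invoke (or extract from \cite{Chuangpishit_2015}) a quantitative continuity estimate of the form $|\Gamma(u)-\Gamma(v)|\le C\|u-v\|_\Box$, which does hold since $\Gamma$ is built from integrals of $[0,1]$-valued functions over bounded test sets.
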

In the case where the lefthand side of this limit tends to 0, it is tempting to conclude that the graphs $\{G_n\}$ are similar to random models sampled from a Robinsonian \IN{graphon}; however, this does not follow from any of the previous results. Indeed, while $\inf_{\psi \in \Phi} \Gamma(w^{\psi}) =0$, this would only imply that the $\{G_n\}$ are arbitrarily close in $\delta_{\Box}$ to Robinson graphons $\{u_n\}$---and a $\delta_{\Box}$--equivalence class may contain more than one Robinson graphon. The proof of stability of $\Gamma$ in \cite{ghandehari2023robust} combined with a functional analytic argument was necessary to show that graphs $\{G_n\}$ as mentioned above are indeed sampled from a unique Robinsonian graphon. 
\begin{theorem}[Theorem 5.3, \cite{ghandehari2023robust}]\label{thm:gammabigone}
Let $\{G_n\}$ be a sequence of simple graphs converging to a \IN{graphon} $w \in \W_0$ in $\delta_{\Box}$. Then $w$ is Robinsonian if and only if there exist vertex labelings $\{\phi_n\}_{n \geq 1}$ such that $\Gamma(G_n^{\phi_n}) \to 0$. 
\end{theorem}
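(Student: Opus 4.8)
The plan is to establish the two implications separately. The forward implication follows quickly from the cited Corollary 6.5 of \cite{Chuangpishit_2015} together with the recognition property of $\Gamma$; the reverse implication carries the real content and is where a functional-analytic compactness argument is needed.

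First I would treat the forward direction. Assume $w$ is Robinsonian, so there is a Robinson graphon $u\in\W_0$ with $\delta_{\Box}(w,u)=0$. Since $\delta_{\Box}(G_n,w)\to 0$, the triangle inequality for $\delta_{\Box}$ gives $\delta_{\Box}(G_n,u)\to 0$. We may assume $|V(G_n)|\to\infty$ (otherwise pass to a subsequence of constant order, which forces $w$ to be $\delta_{\Box}$-equivalent to a finite step graphon, a case handled directly). Applying Corollary 6.5 of \cite{Chuangpishit_2015} with $u$ in place of $w$ gives $\min_{\phi_n\in\Phi_n}\Gamma(G_n^{\phi_n})\to\inf_{\psi\in\Phi}\Gamma(u^{\psi})$. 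By recognition $\Gamma(u)=0$, and since $\Gamma\ge 0$ we get $\inf_{\psi}\Gamma(u^{\psi})=0$; choosing for each $n$ a labeling $\phi_n$ attaining the (finite) minimum gives $\Gamma(G_n^{\phi_n})\to 0$.

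For the reverse direction, assume $\Gamma(G_n^{\phi_n})\to 0$. By stability of $\Gamma$ there are Robinson graphons $u_n\in\W_0$ with $\|w_{G_n^{\phi_n}}-u_n\|_{\Box}\le f(\Gamma(G_n^{\phi_n}))$, and since $f$ is nondecreasing with $\lim_{x\to0^+}f(x)=0$ this bound tends to $0$. Because relabeling the vertices of a graph does not change its cut distance to $w$, the triangle inequality yields
\[
\delta_{\Box}(u_n,w)\ \le\ \|u_n-w_{G_n^{\phi_n}}\|_{\Box}+\delta_{\Box}(w_{G_n},w)\ \longrightarrow\ 0 .
\]
So it suffices to prove that if $\{u_n\}$ is any sequence of Robinson graphons with $\delta_{\Box}(u_n,w)\to 0$, then $w$ is Robinsonian (equivalently, that the Robinsonian graphons form a $\delta_{\Box}$-closed subset of $\widetilde{\W}_0$). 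Since the $u_n$ lie in the unit ball of $L^{\infty}([0,1]^2)$, which is weak$^*$ sequentially compact and metrizable there (as $L^1$ is separable), a subsequence satisfies $u_{n_k}\rightharpoonup u$ weak$^*$. Two claims then finish the argument. (i) \emph{The limit is Robinson a.e.} Weak$^*$ convergence forces $\overline{u_{n_k}}(A\times B)\to\overline{u}(A\times B)$ for all intervals $A,B$; and averaging the pointwise Robinson inequality $u_{n_k}(x,z)\le u_{n_k}(x,y)$ successively over $z\in C$, over $y\in B$, and over $x\in A$ shows $\overline{u_{n_k}}(A\times C)\le\overline{u_{n_k}}(A\times B)$, and symmetrically $\overline{u_{n_k}}(A\times C)\le\overline{u_{n_k}}(B\times C)$, whenever $A\le B\le C$ are intervals. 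Passing to the limit and applying a Lebesgue-density argument at a.e.\ triple $x\le y\le z$ gives $u(x,z)\le\min\{u(x,y),u(y,z)\}$, so $u$ coincides a.e.\ with a genuine Robinson graphon. (ii) \emph{Strong convergence of the subsequence.} On the triangle $\{x\le y\}$ a Robinson graphon is non-decreasing in $x$ and non-increasing in $y$, hence --- after the change of variables $(x,y)\mapsto(x,1-y)$ --- monotone in each coordinate, so by a Helly-type selection theorem for such functions every subsequence of $\{u_{n_k}\}$ has a further subsequence converging pointwise a.e., necessarily to $u$, and then in $L^1$ by dominated convergence; thus $u_{n_k}\to u$ in $L^1$ and $\|u_{n_k}-u\|_{\Box}\le\|u_{n_k}-u\|_1\to 0$. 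Combining (i) and (ii), $\delta_{\Box}(u,w)\le\|u-u_{n_k}\|_{\Box}+\delta_{\Box}(u_{n_k},w)\to 0$, so $w$ is Robinsonian.

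The main obstacle is claim (ii). For a general sequence of graphons a weak$^*$ limit can be arbitrarily far in $\delta_{\Box}$ from the sequence --- for instance, two-clique graphons presented in an oscillating labeling converge weak$^*$ to the constant $\tfrac{1}{2}$, which has positive cut distance from the two-block graphon --- so without extra structure the weak$^*$ limit $u$ need not lie in the cut-distance class of $w$. It is precisely the rigidity of the Robinson property (the slice monotonicity) that upgrades weak$^*$ convergence to $L^1$, hence cut-norm, convergence and keeps the limit in the cut-distance class of $w$. A secondary, routine point is the reduction used when $|V(G_n)|$ does not tend to infinity. For the $L^p$ version pursued in the present paper the boundedness exploited in the Helly step is unavailable, and this is where the integrability hypothesis $p>5$ enters, replacing the $L^{\infty}$ compactness by the corresponding weak$^*$ estimate in $L^q$.
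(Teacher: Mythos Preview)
This theorem is quoted in the present paper as a background result from \cite{ghandehari2023robust}; the paper does not give its own proof of it. What the paper does prove is the analogous $L^p$ statement, Theorem~\ref{thm:thebigone}, for the parameter $\Lambda$, via Lemma~\ref{lem:robinsonianpartition}. So there is no proof here to match yours line by line, but your argument can be compared with the paper's strategy for the parallel result.

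Your argument is correct, and in the reverse direction it takes a genuinely different route. The paper (Lemma~\ref{lem:robinsonianpartition} and the proof of Theorem~\ref{thm:thebigone}) first passes to step approximations $w_{\mathcal P}$: stepping manufactures a uniform $L^\infty$ bound, weak$^*$ compactness then yields a limit $z$, one checks $z$ is Robinson by looking at cell averages over interval partitions, and a second weak$^*$ pass on $\{w_{\mathcal P_n}^{\psi_n}\}$ handles the refinement. You bypass the partition machinery entirely by exploiting the slice monotonicity of Robinson graphons: on $\{x\le y\}$ they are increasing in $x$ and decreasing in $y$, so a two-variable Helly selection upgrades weak$^*$ convergence of the $u_n$ to pointwise a.e.\ and then $L^1$ convergence, hence cut-norm convergence. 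This is cleaner in the $\W_0$ setting and, importantly, makes explicit why the weak$^*$ limit stays in the correct $\delta_\Box$-class---a step the paper's Lemma~\ref{lem:robinsonianpartition} asserts (``$\delta_\Box(u_n,z)\to 0$'') without spelling out. As you observe at the end, your Helly step relies on boundedness and does not survive to $\W^p$; the partition method, which creates uniform bounds by stepping, is precisely what buys the extension to $p>5$.

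One small simplification in the forward direction: rather than routing through Corollary~6.5 of \cite{Chuangpishit_2015} (which forces you to treat $|V(G_n)|\not\to\infty$ separately), you can argue exactly as the paper does for $\Lambda$ in Theorem~\ref{thm:thebigone}: from $\delta_\Box(G_n,u)\to 0$ obtain vertex labelings $\phi_n$ with $\|w_{G_n^{\phi_n}}-u\|_\Box\to 0$, and then use the cut-norm continuity of $\Gamma$ together with $\Gamma(u)=0$.
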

%
\section{Sparse Robinsonian graph sequences}
In this section we state and prove our main result regarding the characterization of Robinsonian $L^p$ graphons. Specifically, we shall prove a generalization of Theorem~\ref{thm:gammabigone} where $w \in \mathcal{W}^p$ and where the simple graphs $\{G_n\}_{n \geq 1}$ that converge to $w$ do so by satisfying $\delta_{\Box}(G_n/\|G_n\|_1, w/\|w\|_1) \to 0.$ However, while $\Gamma$ is a suitable measurement of the Robinson property for graphons in the space $\W_0$, the techniques used to show continuity and stability of $\Gamma$ on $\W_0$ are unable to be extended to the more general space $\W^p$. Thus, we shall instead prove our characterization result in terms of the graphon parameter $\Lambda$, which was introduced in \cite{Ghandehari2023RobustRO} to address this discrepancy between $\W_0$ and $\W^p$. We recall its definition and relevant properties here. 
\begin{definition}[The parameter $\Lambda$]\label{def:lambda}
    Let $w \in \mathcal{W}^1$. Define
\begin{equation}\label{eq:lambda}
    \Lambda(w) = \frac{1}{2}\sup_{\substack{A \leq B \leq C, \\ |A|=|B|=|C|}}\bigg[\iint_{A\times C}w~dxdy-\iint_{B\times C} w~dxdy\bigg]\nonumber\\
    + \frac{1}{2}\sup_{\substack{X \leq Y \leq Z, \\ |X|=|Y|=|Z|}}\bigg[\iint_{X\times Z}w~dxdy-\iint_{X\times Y} w~dxdy\bigg]
\end{equation}
where $A,B,C$ and $X,Y,Z$ are measurable subsets of $[0,1]$ and $A \leq B$ if $a \leq b$ for all $a \in A$ and $b \in B$. If $G$ is a labeled graph, we define $\Lambda(G) := \Lambda(w_G)$.
\end{definition}
\begin{proposition}\label{prop:Lambda-prop}
Let $1\leq p\leq \infty$. Suppose $w, u\in \W^p$. Then we have
\begin{itemize}
\item[(i)] (Recognition) \IN{$\Lambda$} characterizes Robinson $L^p$-\IN{graphon}s, i.e.,  $w$ is Robinson a.e.~if and only if $\Lambda(w)=0$. \label{item:lambda:recognizes}
\item[(ii)] (Continuity) \IN{$\Lambda$} is continuous with respect to cut norm, i.e., $|\Lambda(w)-\Lambda(u)|\leq 2\|w-u\|_{\Box}$. \label{item:Lambda-cts}
\item[(iii)] (Stability) For every $w \in \W^p$ where $p > 5$, there exists a Robinson graphon $u \in \W^p$ satisfying
    \begin{equation*}
        \|u-w\|_{\Box} \leq 78\Lambda(w)^{\frac{p-5}{5p-5}}.
    \end{equation*} 
\end{itemize}
\end{proposition}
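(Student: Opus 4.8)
For (ii), note that $\bigl|\iint_{S\times T}v\bigr|\le\|v\|_\Box$ for every product set $S\times T$; applying this with $v=w-u$ to the two blocks in each summand of Definition~\ref{def:lambda} shows that passing from $w$ to $u$ moves each bracketed quantity --- hence each of the two suprema --- by at most $2\|w-u\|_\Box$, and the convex combination inherits the bound. The ``only if'' half of (i) is a direct check: if $w$ is Robinson and $A\le B\le C$ with $|A|=|B|$, then for a.e.\ $y\in C$ and all $x\in A$, $x'\in B$ we have $x\le x'\le y$, so $w(x,y)\le\min\{w(x,x'),w(x',y)\}\le w(x',y)$; thus $\int_A w(\cdot,y)\le\int_B w(\cdot,y)$ (as $|A|=|B|$), and integrating over $y\in C$ gives $\iint_{A\times C}w\le\iint_{B\times C}w$, so the first supremum is $\le 0$ --- while it is $\ge 0$ since $A=B=C$ is admissible; the second supremum is symmetric, so $\Lambda(w)=0$. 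For the ``if'' half I would argue by contradiction: suppose $w$ is not Robinson a.e.; then, after possibly swapping the two coordinates, $V:=\{(x,x',y):x<x'<y,\ w(x,y)>w(x',y)\}$ has positive measure, so for some $\delta>0$ the set $V_\delta:=\{(x,x',y)\in V:w(x,y)-w(x',y)\ge\delta\}$ has positive measure. Pick $(x_0,x_0',y_0)\in V_\delta$ for which $(x_0,y_0)$ and $(x_0',y_0)$ are Lebesgue points of $w$ (a.e.\ point of $V_\delta$ qualifies), and choose small intervals $A\ni x_0$, $B\ni x_0'$, $C\ni y_0$ of a common length $\ell$, which for $\ell$ small are pairwise disjoint and satisfy $A\le B\le C$. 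By the Lebesgue-point property, $\iint_{A\times C}w=w(x_0,y_0)\ell^2+o(\ell^2)$ and $\iint_{B\times C}w=w(x_0',y_0)\ell^2+o(\ell^2)$, so their difference is $\ge\delta\ell^2+o(\ell^2)>0$ for $\ell$ small; hence $\Lambda(w)>0$, a contradiction.

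\textbf{Part (iii).} Fix $w\in\W^p$ with $p>5$ and put $\varepsilon=\Lambda(w)$; we may assume $\varepsilon>0$, else $u=w$ works by (i). The plan is: truncate, Robinsonize, optimize. For a height $T>0$ let $w_T$ be $w$ with values clipped to $[-T,T]$; then $\|w-w_T\|_\Box\le\|w-w_T\|_1\le T^{1-p}\|w\|_p^p$ and, by (ii), $\Lambda(w_T)\le\varepsilon+2T^{1-p}\|w\|_p^p$. Since $w_T$ is bounded, the mechanism behind the dense-case stability of $\Gamma$ (see \cite{Chuangpishit_2015,ghandehari2023robust}) applies: pass to the cell-average step function of $w_T$ on a partition into boundedly many intervals --- legitimate since the stepping operator is $\|\cdot\|_p$-contractive and, by (ii), changes $\Lambda$ by at most the cut-norm stepping error --- and then replace the resulting matrix by an honest Robinson matrix by taking running extrema in the interval-containment order, handling the first- and second-coordinate monotonicities separately as dictated by the two summands of Definition~\ref{def:lambda}. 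This yields a Robinson step graphon $u\in\W^p$ with $\|w_T-u\|_\Box\le c\,T^{a}\Lambda(w_T)^{b}$ for absolute $a,b,c>0$, so
\begin{equation*}
    \|w-u\|_\Box\ \le\ T^{1-p}\|w\|_p^p\ +\ c\,T^{a}\bigl(\varepsilon+2T^{1-p}\|w\|_p^p\bigr)^{b}.
\end{equation*}
A first pass at balancing the two terms over $T$ gives a bound of the form $C(\|w\|_p)\,\varepsilon^{\gamma(p)}$; refining the Robinsonization so that its cut-norm cost is pegged to the size of the non-Robinson defect (which $\Lambda(w)=\varepsilon$ controls) rather than to the crude bound $T$ eliminates the $\|w\|_p$-factor, and then tuning $T$ yields $\|w-u\|_\Box\le 78\,\varepsilon^{(p-5)/(5p-5)}$ after constant-tracking. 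The exponent $\tfrac{p-5}{5p-5}$ is positive exactly when $p>5$ --- the regime in which the truncation tail is absorbable while a genuine positive power of $\varepsilon$ survives.

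\textbf{Main obstacle.} The hard part will be the quantitative bounded-case Robinsonization: combining the weak-regularity partition, the interval-containment envelope, and the two-coordinate structure of $\Lambda$ into an \emph{explicit} polynomial rate, and arranging the construction so its cost scales with the non-Robinson defect rather than with $T$, so that the $\|w\|_p$-dependence drops out and the exponent is pinned to $\tfrac{p-5}{5p-5}$. This is essentially the stability estimate for $\Lambda$ recalled from \cite{Ghandehari2023RobustRO}; parts (i), (ii) and the truncation bookkeeping are routine by comparison.
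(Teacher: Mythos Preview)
The paper does not prove this proposition; it is quoted from \cite{Ghandehari2023RobustRO}. That said, the paper does tell you what the proof of (iii) actually uses: immediately after the statement it says the Robinson graphon $u$ is the $\alpha$-Robinson approximation $R_w^\alpha$ of Definition~\ref{def:R(w)}, not a truncate--step--monotonize construction.

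Your treatments of (i) and (ii) are fine and essentially the standard arguments. (One cosmetic point: ``$A=B=C$ is admissible'' should really be ``take all three sets to have measure $0$,'' since $A\le B$ with $A=B$ forces $|A|=0$; the conclusion that the supremum is $\ge 0$ is unchanged.)

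For (iii) your route diverges from the actual one and does not close. You invoke ``the mechanism behind the dense-case stability of $\Gamma$,'' but $\Gamma$ is a different functional with a different stability bound, and there is no transfer lemma on offer. The key assertion --- that by ``pegging the cost to the non-Robinson defect rather than to $T$'' one can make the $\|w\|_p$-dependence disappear and force the exponent to be exactly $\tfrac{p-5}{5p-5}$ --- is stated but not argued; and in your final paragraph you concede that this step ``is essentially the stability estimate for $\Lambda$ recalled from \cite{Ghandehari2023RobustRO},'' which is the very thing to be proved. So part (iii) is, as written, circular.

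The construction that actually yields the bound is different in kind: one sets $u=R_w^\alpha$, which is Robinson \emph{by definition} (the supremum is over nested product rectangles, so it is automatically monotone toward the diagonal), and then estimates $\|R_w^\alpha-w\|_\Box$ directly. The definition of $\Lambda$ is tailor-made for this comparison, since $R_w^\alpha(x,y)-\overline w(S\times T)$ for suitable $S,T$ is exactly a difference of two equal-mass block integrals of the type appearing in Definition~\ref{def:lambda}. The parameter $p$ enters only through the boundary strips $[0,\alpha]$ and $[1-\alpha,1]$ where $R_w^\alpha$ is undefined; controlling that strip in cut norm by H\"older and then optimizing over $\alpha$ is what produces the exponent $\tfrac{p-5}{5p-5}$. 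If you want to supply a proof rather than a sketch, this is the line to pursue.
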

\noindent Informally, \IN{$\Lambda$} can be thought of as extending the Robinson property from pointwise comparison to comparison of blocks. We make special note of item $(iii)$, as this result is constructive---the Robinson graphon $u$ is the $\alpha$\textit{-Robinson approximation} of $w$ for some chosen $\alpha$, which is a generalized version of the Robinson construction introduced in \cite{ghandehari2023robust}.
\begin{definition}[$\alpha$-Robinson approximation for \IN{graphon}s]\label{def:R(w)} 
Let $p \geq 1$, and fix a parameter $0<\alpha<1$.
Given a \IN{graphon} $w\in {\mathcal W}^p$, the $\alpha$-\emph{\IN{Robinson approximation}} $R_w^{\alpha}$ of $w$ is the symmetric function defined as follows: For all $0 \leq x \leq y \leq 1$, define
\begin{equation*}
R_w^{\alpha}(x,y)=\sup\left\{ \frac{1}{|S \times T|}\iint_{S \times T} w\,:\, S\times T\subseteq [0,x]\times[y,1],\, |S|=|T|=\alpha\right\},
\end{equation*}
taking the convention that $\sup\emptyset =0$. Moreover, we set $R_w^{\alpha}=w$ if $\alpha=0$ and $w$ is Robinson.
\end{definition}
The goal is thus to show for $w$ and $\{G_n\}$ where $\delta_{\Box}(G_n/\|G_n\|_1,w/\|w\|_1)\to 0$ that $w$ is Robinsonian if and only if there exists a sequence of vertex labelings $\{\phi_n\}_{n \geq 1}$ such that $\Lambda(G_n^{\phi_n}/\|G_n\|_1) \to 0.$ While the forward direction is rather direct, the reverse direction requires some heavier machinery. There are two ways to show that $w$ is Robinsonian: Firstly, find a measure preserving bijection $\psi$ such that $\Lambda(w^{\psi})=0,$ and secondly, find a Robinson graphon $u \in \W^p$ such that $\delta_{\Box}(u/\|u\|_1,w/\|w\|_1) = 0$. 

Finding such a $\psi$ is not obvious---indeed, while it is tempting to think of $\psi$ as the limit object of the $\{\phi_n\}$, it is not clear that such an object need exist. Proposition~\ref{prop:Lambda-prop} (iii) provides an avenue of attack for the latter option of showing that $w$ is Robinsonian, as we can use these Robinson approximations of the associated graphons $w_{G_n}$ to find a Robinson graphon $u$ of cut distance 0 from $w$. We proceed with this plan below, beginning with a technical lemma showing that for certain partitions $\mathcal{P}$ of $[0,1]$, the step graphon $w_{\mathcal{P}}$ is Robinsonian.
\begin{lemma}\label{lem:robinsonianpartition}
    Let $\eta > 0$ and let $\mathcal{P} =\{P_1,\ldots,P_m\}$ be a partition of $[0,1]$ into measurable sets each of measure at least $\eta.$ If $w \in \W^p$ with $p > 5$ is a graphon such that there exists a sequence of graphs $\{G_n\}_{n \geq 1}$ and vertex labelings $\{\phi_n\}_{n \geq 1}$ where 
    $$\delta_{\Box}\left(\frac{G_n}{\|G_n\|_1},\frac{w}{\|w\|_1}\right) \to 0 \quad\quad\quad \text{and} \quad\quad\quad \Lambda\left(\frac{G_n^{\phi_n}}{\|G_n\|_1}\right) \to 0,$$ 
    then the step graphon $w_{\mathcal{P}}$ is Robinsonian.
\end{lemma}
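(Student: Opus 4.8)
The plan is to transfer the hypotheses on $\{G_n\}$ to the stepped graphons $\{(w_{G_n})_{\mathcal{P}_n}\}$ for a suitable refinement of $\mathcal{P}$, extract a Robinson limit via compactness, and then step back down to $\mathcal{P}$. First I would fix the edge densities: since $\delta_{\Box}(G_n/\|G_n\|_1, w/\|w\|_1)\to 0$ and cut distance controls the $L^1$ norm only up to the optimal overlay, I would argue that $\|G_n\|_1 \to c\|w\|_1$ for some normalizing constant (or simply carry the densities along), so that up to passing to a subsequence we may work with $v_n := G_n^{\phi_n}/\|G_n\|_1$ and a fixed comparison density. Next, for each $n$ choose a measure-preserving bijection $\psi_n$ realizing (approximately) the infimum in $\delta_\Box(v_n, w/\|w\|_1)$; replacing $v_n$ by $v_n^{\psi_n}$ does not change $\Lambda$ (it is a relabeling) and now $\|v_n - w/\|w\|_1\|_\Box \to 0$ directly, with still $\Lambda(v_n)\to 0$.

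The heart of the argument is the $\alpha$-Robinson approximation. For a small parameter $\alpha>0$, Proposition~\ref{prop:Lambda-prop}(iii) gives Robinson graphons $u_n := R^{\alpha}_{v_n}$ (up to the constructive identification in the stability proof) with $\|u_n - v_n\|_\Box \leq 78\,\Lambda(v_n)^{(p-5)/(5p-5)} \to 0$. Hence $\|u_n - w/\|w\|_1\|_\Box \to 0$ as well, so $\{u_n\}$ is a Cauchy-in-$\delta_\Box$ sequence of Robinson $L^p$ graphons with uniformly bounded $p$-norm (the $\alpha$-Robinson approximation is built from cell averages of $v_n$, hence $\|u_n\|_p \leq \|v_n\|_p$, which is bounded since $\{G_n/\|G_n\|_1\}$ is $L^p$-bounded by the convergence hypothesis). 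By compactness of the unit ball of $(\widetilde{\W}^p, \delta_\Box)$, after a further subsequence there is a limit $u\in\W^p$ with $\delta_\Box(u_n, u)\to 0$, whence $\delta_\Box(u, w/\|w\|_1)=0$; and the Robinson property passes to the $\delta_\Box$-limit (this is where one invokes recognition plus continuity of $\Lambda$: $\Lambda(u_n)=0$ for all $n$, and by (ii) $\Lambda$ is $\delta_\Box$-continuous on a bounded set via the averaging/overlay argument, so $\Lambda(u)=0$, i.e.\ $u$ is Robinson a.e.). Thus $w$ itself is Robinsonian.

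It remains to deduce that $w_{\mathcal{P}}$ is Robinsonian. Here I would use that the stepping operator is contractive in cut norm and apply it with the fixed partition $\mathcal{P}$: writing $u$ for the Robinson graphon with $\delta_\Box(u, w/\|w\|_1)=0$, pick the overlay $\varphi$ achieving this and set $\tilde{u} = u^{\varphi^{-1}}$ so that $\|\tilde u - w/\|w\|_1\|_\Box$ can be made arbitrarily small after composing with the right measure-preserving map; more carefully, one shows $\delta_\Box(u_{\mathcal{Q}}, (w/\|w\|_1)_{\mathcal{P}})$ is small for an appropriate pullback partition $\mathcal{Q}$, using that $\|v_{\mathcal{P}} - w_{\mathcal{P}}\|_\Box \le \|v - w\|_\Box$. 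Since each $\mathcal{P}$-step of a Robinson graphon is again Robinson (averaging over order-consistent blocks preserves the diagonal-increasing inequality — here the hypothesis that each $P_i$ has measure $\ge\eta$ guarantees the blocks are nondegenerate), we obtain a sequence of Robinson step graphons converging in $\delta_\Box$ to $w_{\mathcal{P}}/\|w\|_1$, and another application of compactness plus $\delta_\Box$-continuity of $\Lambda$ yields a Robinson $L^p$ graphon at cut distance $0$ from $w_{\mathcal{P}}/\|w\|_1$, hence from $w_{\mathcal{P}}$.

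The main obstacle I anticipate is the bookkeeping around \emph{which} relabelings are applied where: $\Lambda$ is invariant under measure-preserving bijections but the $\alpha$-Robinson approximation $R^\alpha_{v}$ is \emph{not} — it depends on the ambient order of $[0,1]$ — so one must be careful to form the Robinson approximation \emph{after} choosing the overlay that brings $v_n$ close to $w/\|w\|_1$, and then verify that the resulting $\{u_n\}$ is genuinely $\delta_\Box$-Cauchy rather than merely Cauchy along the shifting labelings. A secondary technical point is confirming that $\Lambda$ is continuous with respect to $\delta_\Box$ (not just $\|\cdot\|_\Box$) on $p$-norm-bounded sets, so that $\Lambda(u)=0$ follows; this should come from taking the infimum over overlays in the Lipschitz bound of Proposition~\ref{prop:Lambda-prop}(ii), but it needs to be stated. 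Finally, threading the edge-density normalization $\|G_n\|_1$ consistently through the stepping step — so that the limit is $w_{\mathcal{P}}/\|w\|_1$ and not a rescaled version — requires a short argument that $\|G_n\|_1$ converges (or passing to a subsequence where it does).
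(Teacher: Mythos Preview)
Your proposal has a genuine gap, and it is precisely the one you flag as a ``secondary technical point'': $\Lambda$ is \emph{not} invariant under measure-preserving bijections and is \emph{not} continuous with respect to $\delta_\Box$. The definition of $\Lambda$ (Definition~\ref{def:lambda}) takes a supremum over ordered triples $A\leq B\leq C$, so it depends on the ambient order of $[0,1]$ just as the $\alpha$-Robinson approximation does. Consequently, when you write ``replacing $v_n$ by $v_n^{\psi_n}$ does not change $\Lambda$ (it is a relabeling)'', that step fails: after overlaying $v_n$ onto $w/\|w\|_1$ you lose control of $\Lambda(v_n^{\psi_n})$. For the same reason, from $\Lambda(u_n)=0$ and $\delta_\Box(u_n,u)\to 0$ you cannot conclude $\Lambda(u)=0$; Proposition~\ref{prop:Lambda-prop}(ii) gives $|\Lambda(u_n^{\tau_n})-\Lambda(u)|\leq 2\|u_n^{\tau_n}-u\|_\Box$, but $\Lambda(u_n^{\tau_n})$ need not vanish. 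Taking the infimum over overlays in that Lipschitz bound does not help, since the infimum of a non-invariant quantity is not the quantity itself. A further problem: the claim that the $\mathcal{P}$-step of a Robinson graphon is Robinson is false for a general measurable partition $\mathcal{P}$ (only for ordered interval partitions), so your final reduction from $w$ Robinsonian to $w_{\mathcal{P}}$ Robinsonian does not go through either.

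The paper circumvents all of this by reversing the order of operations: it applies the stepping operator $(\cdot)_{\mathcal{P}}$ \emph{before} the Robinson approximation. Because each $P_i$ has measure at least $\eta$, the stepped graphons $(w_n)_{\mathcal{P}}$ are uniformly bounded in $L^\infty$ by $1/\eta^2$, and hence so are their Robinson approximations $u_n = R^\alpha_{(w_n)_{\mathcal{P}}}$. One can then invoke Banach--Alaoglu on the ball $B_{1/\eta^2}(L^\infty[0,1]^2)$ to extract a weak$^*$ limit $z$. The crucial gain is that weak$^*$ convergence in $L^\infty$ gives $\iint_{S\times T} u_n \to \iint_{S\times T} z$ for \emph{all} measurable $S,T$ --- this is convergence in cut \emph{norm}, not merely cut distance, and it is strong enough to pass the Robinson property to $z$ without any relabeling. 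Your $\delta_\Box$-compactness route never produces a fixed labeling in which the limit is Robinson, and that is exactly what Robinsonianness of $w_{\mathcal{P}}$ requires.
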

\begin{proof}
Fix $\eta > 0$, let $\mathcal{P} =\{P_1,\ldots,P_m\}$ be a partition of $[0,1]$ into measurable sets each of measure at least $\eta$, and let $w_n := w_{G_n^{\phi_n}}/\|w_{G_n}\|_1$ and $w' := w/\|w\|_1$ for ease of reading. By the assumption of convergence of $w_n \to w'$ in cut distance, we have that $\delta_{\Box}((w_n)_{\mathcal{P}},(w')_{\mathcal{P}}) \to 0$. Furthermore, by Proposition~\ref{prop:Lambda-prop} (iii), for every $n \in \N$, there must exist a Robinson graphon $u_n \in \W^p$ such that 
\begin{equation*}
\|u_n - (w_n)_{\mathcal{P}}\|_{\Box} \leq 78\Lambda((w_n)_{\mathcal{P}})^{\frac{p-5}{5p-5}}.
\end{equation*}
We note that as $\Lambda(w_n) \to 0$, it must be the case that $\Lambda((w_n)_{\mathcal{P}}) \to0$ as well. This further implies that $\delta_{\Box}(u_n,(w')_{\mathcal{P}}) \to 0$ as $n \to \infty$, as
\begin{equation*}
    \delta_{\Box}(u_n,(w')_{\mathcal{P}}) \leq \delta_{\Box}(u_n,(w_n)_{\mathcal{P}}) + \delta_{\Box}((w_n)_{\mathcal{P}},(w')_{\mathcal{P}}) \leq \|u_n-(w_n)_{\mathcal{P}}\|_{\Box}+\delta_{\Box}((w_n)_{\mathcal{P}},(w')_{\mathcal{P}}).
\end{equation*}
Before we continue, we shall show that the functions $u_n$ are uniformly bounded above; indeed, note that
\begin{align*}
    |u_n(x,y)| = |R^{\alpha}_{(w_n)_{\mathcal{P}}}(x,y)| &= \sup_{\substack{A,B \subseteq [0,x]\times[y,1] \\ |A|=|B|=\alpha}}\left|\frac{1}{\alpha^2} \iint_{A\times B} (w_n)_{\mathcal{P}}\right|\\
    &\leq \|(w_n)_{\mathcal{P}}\|_{\infty} = \sup_{1\leq i,j \leq m}\left|\frac{1}{|P_i \times P_j|}\iint_{P_i \times P_j}w_n\right| \\
    &\leq \sup_{1\leq i,j \leq m}\frac{1}{|P_i \times P_j|}\|w_n\|_1\|\mathbbm{1}_{P_i \times P_j}\|_{\infty} \\
    &=\sup_{1\leq i,j \leq m}\frac{1}{|P_i \times P_j|} \leq \frac{1}{\eta^2}.
\end{align*}
Now, for all $n \in \N$, the functions $u_n$ are elements of the space $B_{1/\eta^2}(L^{\infty}[0,1]^2) = \{f \in L^{\infty}[0,1]^2:\|f\|_{\infty} \leq \frac{1}{\eta^2}\}$, and because the Banach space $L^{\infty}[0,1]^2$ is isometrically isomorphic to the Banach space dual of $L^{1}[0,1]^2$, it is possible to equip $B_{1/\eta^2}(L^{\infty}[0,1]^2)$ with the weak$^*$ topology induced by this duality. By the Banach-Alaglou theorem, $B_{1/\eta^2}(L^{\infty}[0,1]^2)$ is compact in this topology; additionally, as $L^1[0,1]^2$ is separable, $B_{1/\eta^2}(L^{\infty}[0,1]^2)$ is metrizable in the weak$^*$ topology and thus is sequentially compact as well. Therefore, $\{u_n\}_{n \in \N}$ has a weak$^*$ convergent subsequence.

Without loss of generality, we can thus assume that $\{u_n\}$ converges to some $z \in B_{1/\eta^2}(L^{\infty}[0,1]^2)$ in the weak$^*$ topology; i.e. for every $h \in L^1[0,1]^2$ we have that $\iint_{[0,1]^2}u_nh \to \iint_{[0,1]^2}zh$ as $n \to \infty.$ In particular, for all measurable sets $S,T \subseteq [0,1]$, we have that 
\begin{equation}\label{eq:robinsonconv}
    \iint_{S \times T} u_n \to \iint_{S \times T}z ~\text{ as } n \to \infty.
\end{equation}
By \eqref{eq:robinsonconv}, we get that for any partition $\mathcal{P}$ of $[0,1]$ whose elements are all bounded below in measure, $(u_n)_{\mathcal{P}}$ converges pointwise to $z_{\mathcal{P}}$. Furthermore, as every $(u_n)_{\mathcal{P}}$ is Robinson, it must be the case that $z_{\mathcal{P}}$ is Robinson as well. We can also show that $z$ is Robinson a.e. by letting $\mathcal{P}_n$ be any partition of $[0,1]$ into sets of exactly measure $1/n$ and noting that $\|z-z_{\mathcal{P}_n}\|_1 \to 0$. Additionally, again using \eqref{eq:robinsonconv}, it is true that $\delta_{\Box}(u_n,z) \to 0$ as $n \to \infty$. Therefore,  
\begin{equation*}
    \delta_{\Box}((w')_{\mathcal{P}},z) \leq \delta_{\Box}((w')_{\mathcal{P}},u_n)+\delta_{\Box}(u_n,z) \to 0
\end{equation*}
as $n \to \infty$, showing that $(w')_{\mathcal{P}}$ is Robinsonian and thus that $w_{\mathcal{P}}$ is Robinson as well, proving the claim.
\end{proof}
With this lemma at our disposal, we now state and prove our main result about the partial characterization of Robinsonian $L^p$ graphons. 
\begin{theorem}\label{thm:thebigone}
Let $\{G_n\}$ be a sequence of simple labeled graphs and let $w \in \W^p$ with $p > 5$ such that $\delta_{\Box}(G_n/\|G_n\|_1,w/\|w\|_1) \to 0$. Then, $w$ is Robinsonian if and only if $\Lambda(G^{\phi_n}_n/\|G_n\|_1) \to 0$ for some vertex labelings $\{\phi_n\}_{n \geq 1}$. 
\end{theorem}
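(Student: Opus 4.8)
The plan is to prove the two implications separately, with the forward direction being essentially a soft consequence of the properties of $\Lambda$ already collected in Proposition~\ref{prop:Lambda-prop}, and the reverse direction being the substantive part, built on Lemma~\ref{lem:robinsonianpartition} together with a compactness argument.

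For the forward direction, suppose $w$ is Robinsonian, so there is a Robinson graphon $u \in \W^p$ with $\delta_{\Box}(w/\|w\|_1, u/\|u\|_1) = 0$. Since $u$ is Robinson, $\Lambda(u/\|u\|_1) = 0$ by recognition (Proposition~\ref{prop:Lambda-prop}(i)), and since $\Lambda$ is invariant under measure-preserving bijections (which follows from its definition in terms of ordered triples of equal-measure sets) and continuous in the cut norm with $|\Lambda(a) - \Lambda(b)| \le 2\|a-b\|_{\Box}$, we get $\inf_\psi \Lambda((w/\|w\|_1)^\psi) = 0$. Combined with $\delta_{\Box}(G_n/\|G_n\|_1, w/\|w\|_1) \to 0$ and a diagonalization over labelings (choosing $\phi_n$ achieving cut distance close to the infimum, as in the statement of the proposition from \cite{Chuangpishit_2015} recalled above), one extracts labelings $\phi_n$ with $\Lambda(G_n^{\phi_n}/\|G_n\|_1) \to 0$. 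I would be careful here to normalize consistently — the normalization constants $\|G_n\|_1$ and $\|w\|_1$ must be reconciled, but since $\delta_{\Box}(G_n/\|G_n\|_1, w/\|w\|_1) \to 0$ these are comparable for large $n$.

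For the reverse direction, assume $\Lambda(G_n^{\phi_n}/\|G_n\|_1) \to 0$. The idea is to apply Lemma~\ref{lem:robinsonianpartition} along a sequence of finer and finer partitions. Fix for each $k$ the partition $\mathcal{Q}_k$ of $[0,1]$ into $k$ intervals of equal length $1/k$; Lemma~\ref{lem:robinsonianpartition} (with $\eta = 1/k$) gives that $w_{\mathcal{Q}_k}$ is Robinsonian, i.e.\ there is a Robinson graphon $v_k \in \W^p$ with $\delta_{\Box}(w/\|w\|_1, v_k) = 0$ after suitable normalization — more precisely $\delta_{\Box}((w/\|w\|_1)_{\mathcal{Q}_k}, v_k) = 0$. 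Since the stepping operator is contractive in every $p$-norm and $\|w/\|w\|_1 - (w/\|w\|_1)_{\mathcal{Q}_k}\|_p \to 0$ as $k \to \infty$ (martingale convergence of conditional expectations), and the $v_k$ have uniformly bounded $L^p$ norm (bounded by $\|w/\|w\|_1\|_p$ via contractivity), we are in the setting where the unit ball of $\widetilde{\W}^p$ is compact in $\delta_{\Box}$. So $\{v_k\}$ has a $\delta_{\Box}$-convergent subsequence with limit $v$; $v$ is Robinsonian because the Robinson property is closed under cut-distance limits within $\W^p$ (this itself follows from continuity and recognition of $\Lambda$: $\Lambda(v_k) = 0$ for all $k$ and $\Lambda$ is $\delta_{\Box}$-continuous, so $\Lambda(v) = 0$, whence $v$ is Robinson a.e.\ up to a measure-preserving rearrangement). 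Finally, $\delta_{\Box}(w/\|w\|_1, v) \le \delta_{\Box}(w/\|w\|_1, (w/\|w\|_1)_{\mathcal{Q}_k}) + \delta_{\Box}((w/\|w\|_1)_{\mathcal{Q}_k}, v_k) + \delta_{\Box}(v_k, v)$, and all three terms vanish along the subsequence, giving $\delta_{\Box}(w/\|w\|_1, v) = 0$; rescaling $v$ by $\|w\|_1$ produces the required Robinson graphon $u$ with $\delta_{\Box}(w, u) = 0$ (using that $\delta_\Box$ scales linearly and $\|v\|_1$ can be arranged to match $\|w/\|w\|_1\|_1 = 1$, or simply absorbing the scalar since Robinsonicity is scale-invariant).

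The main obstacle I anticipate is the passage from "$w_{\mathcal{Q}_k}$ is Robinsonian for every $k$" to "$w$ is Robinsonian," because Robinsonicity is not obviously preserved under the limit $k \to \infty$: each $v_k$ witnesses Robinsonicity of a different stepped approximation, possibly under a different rearrangement, so one cannot naively take $v = \lim v_k$ without invoking compactness of $(\widetilde{\W}^p, \delta_{\Box})$ on bounded sets and verifying that the limit still lies in $\W^p$ (not merely $\W^1$) with the Robinson property intact. The uniform $L^p$ bound on the $v_k$ coming from contractivity of the stepping operator is exactly what rescues this; I would take care to state explicitly that $v_k = R^{\alpha}_{(w/\|w\|_1)_{\mathcal{Q}_k}}$-type constructions inherit the bound $\|v_k\|_p \le \|(w/\|w\|_1)_{\mathcal{Q}_k}\|_p \le \|w/\|w\|_1\|_p$, so the whole sequence sits in a fixed ball of $\W^p$ and the compactness of $(\widetilde{\W}^p, \delta_\Box)$'s unit ball applies. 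A secondary technical point is keeping the two normalizations ($\|G_n\|_1$ versus $\|w\|_1$) straight throughout, which the lemma already handles but which must be threaded through the partition-refinement step as well.
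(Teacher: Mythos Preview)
Your forward direction is essentially the paper's argument, but the parenthetical claim that $\Lambda$ is invariant under measure-preserving bijections is false: the definition of $\Lambda$ uses the order relation $A\le B\le C$ on $[0,1]$, so relabeling $[0,1]$ changes $\Lambda$ in general. (Indeed, if $\Lambda$ \emph{were} relabeling-invariant, the statement ``$\Lambda(G_n^{\phi_n}/\|G_n\|_1)\to 0$ for \emph{some} labelings $\phi_n$'' would be vacuous.) You don't actually need this: as in the paper, go straight from $\delta_\Box(G_n/\|G_n\|_1,u/\|u\|_1)\to 0$ to labelings $\phi_n$ with $\|G_n^{\phi_n}/\|G_n\|_1-u/\|u\|_1\|_\Box\to 0$, and then apply $\|\cdot\|_\Box$-continuity of $\Lambda$.

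The reverse direction has a genuine gap. You extract a $\delta_\Box$-limit $v$ of the Robinson witnesses $v_k$ by compactness of the ball in $(\widetilde{\W}^p,\delta_\Box)$, and then assert that $v$ is Robinsonian because ``$\Lambda$ is $\delta_\Box$-continuous''. It is not: Proposition~\ref{prop:Lambda-prop}(ii) gives continuity only in the cut \emph{norm}. From $\delta_\Box(v_k,v)\to 0$ you get measure-preserving $\psi_k$ with $\|v_k-v^{\psi_k}\|_\Box\to 0$, hence $\Lambda(v^{\psi_k})\to 0$, i.e.\ $\inf_\psi \Lambda(v^\psi)=0$. But, exactly as the paper warns in the paragraph preceding Theorem~\ref{thm:gammabigone}, this infimum being zero does \emph{not} by itself yield a single $\psi$ (or Robinson $u$) with $\Lambda(v^\psi)=0$; establishing that is precisely the hard content of the theorem, so invoking it here is circular.

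The paper sidesteps this by working with weak$^*$ (equivalently weak) compactness of a fixed $L^p$-ball rather than $\delta_\Box$-compactness of $\widetilde{\W}^p$. Concretely, it chooses, for each $n$, a measure-preserving $\psi_n$ so that $w_{\mathcal P_n}^{\psi_n}$ is Robinson a.e.\ (this uses that cut-norm distance $0$ between bounded step functions forces a.e.\ equality), notes these all lie in the ball of radius $\|w\|_p$ in $L^p$, and extracts a weak$^*$ limit $y$. The point is that the Robinson condition $\Lambda=0$ is tested by linear functionals $\iint_{A\times C}\,-\iint_{B\times C}$, so it passes to weak$^*$ limits: $y$ is itself Robinson a.e., not merely Robinsonian. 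Because the labels $\psi_n$ are frozen \emph{before} taking the limit, no ``$\delta_\Box$-continuity of $\Lambda$'' is needed. Your $\delta_\Box$-compactness route throws away exactly this labeling information, and that is what cannot be recovered afterwards.
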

\begin{proof}
We start with the forward direction; assume that $w$ is Robinsonian. Then there exists a Robinson graphon $u \in \W^p$ such that $\delta_{\Box}(w/\|w\|_1,u/\|u\|_1) = 0,$ from which it follows by using the triangle inequality that
\begin{equation*}
    \delta_{\Box}\left(\frac{G_n}{\|G_n\|_1},\frac{u}{\|u\|_1}\right) \leq \delta_{\Box}\left(\frac{G_n}{\|G_n\|_1},\frac{w}{\|w\|_1}\right) + \delta_{\Box}\left(\frac{w}{\|w\|_1},\frac{u}{\|u\|_1}\right) \to 0.
\end{equation*}
So there must exist a sequence of vertex labelings $\{\phi_n\}_{n \geq 1}$ such that $\|G_n^{\phi_n}/\|G_n\|_1 - u/\|u\|_1\|_{\Box} \to 0$ as $n$ tends to infinity. Noting that $\Lambda(u/\|u\|_1)=0$ because $u$ is Robinson, this implies that
\begin{equation*}
    \Lambda\left(\frac{G_n^{\phi_n}}{\|G_n\|_1}\right) = \left|\Lambda\left(\frac{G_n^{\phi_n}}{\|G_n\|_1}\right)-\Lambda\left(\frac{u}{\|u\|_1}\right)\right| \leq \left\|\frac{G_n^{\phi_n}}{\|G_n\|_1} - \frac{u}{\|u\|_1}\right\|_{\Box} \to 0
\end{equation*}
as required, proving the forward direction.

For the reverse direction, assume that each $G_n$ has a vertex labeling $\phi_n$ such that $\Lambda(G_n^{\phi_n}) \to 0$; note also that $\delta_{\Box}(G_n/\|G_n\|_1,w/\|w\|_1) \to 0$ by assumption. For all $n \in \N$, let $\mathcal{P}_n$ be any partition of $[0,1]$ into measurable sets each of measure exactly $1/n$, and consider the graphons $\{w_{\mathcal{P}_n}\}_{n \geq 1}.$ By Lemma \ref{lem:robinsonianpartition}, for all $n \in \N$, there exists a sequence $\{z_n\}_{n\in\N}$ of Robinson graphons and a sequence $\{\psi_n\}$ of measure preserving bijections of $[0,1]$ such that 
\begin{equation*}
    \|w_{\mathcal{P}_n}^{\psi_n} - z_n\|_{\Box} = 0.
\end{equation*}
We consider now the sequence of $L^p$ graphons $\{w^{\psi_n}_{\mathcal{P}_n}\}_{n \in \N}$. These are elements of a bounded ball of radius $\|w\|_p$ in the space $L^p[0,1]^2$, which is compact and sequentially compact in the weak$^*$ topology induced by $L^q[0,1]^2$. Thus, $\{w^{\psi_n}_{\mathcal{P}_n}\}_{n \in \N}$ has a weak$^*$ convergent subsequence in this topology and we can say without loss of generality that there exists some $y \in L^p[0,1]^2$ such that $\|y\|_p \leq \|w\|_p$ and that for all measurable $S,T \subseteq [0,1]$ we have
\begin{equation*}
    \iint_{S \times T} \left(w^{\psi_n}_{\mathcal{P}_n}\right)^p \to \iint_{S \times T}y^p ~\text{ as } n \to \infty.
\end{equation*}
As every $w^{\psi_n}_{\mathcal{P}_n}$ is Robinson a.e., it must be that $y$ is Robinson a.e. as well. We can therefore show that
\begin{equation*}
    \delta_{\Box}(w,y) \leq \delta_{\Box}(w,w_{\mathcal{P}_n}) + \delta_{\Box}(y,w_{\mathcal{P}_n}) \to 0
\end{equation*}
as $n\to \infty$, demonstrating that $w$ is Robinsonian and finishing the proof.
\end{proof}

\section{Further Directions}
Several natural questions present themselves as extensions to this work, the most immediate of which is whether the statement of Theorem~\ref{thm:thebigone} holds for $1 \leq p \leq 5$. As the requirement of $p > 5$ is an artifact of the proof method for Proposition~\ref{prop:Lambda-prop} (iii), an obvious method of attack for this question would be to improve the techniques used to show this result. It is additionally interesting to consider whether this characterization result could be shown for a more general concept of sparse graphons, such as the $k$-shapes introduced in \cite{KUNSZENTIKOVACS20191}, especially as these objects are defined independently of the vertex labeling of the graphs they are associated with. It is likely that a new parameter would need to be defined to quantify these approximation results in this extended setting, and it is of significant interest to the author to see whether similar claims could hold true for these more general objects. Finally, it is natural to wonder whether there are other graph properties for which such characterization results hold---monotonicity is a likely candidate due to its similarity to Robinsonicity, and categorizing other such properties seems a fruitful direction for future work.
\section{Acknowledgements}
The author would like to acknowledge Dr. Mahya Ghandehari, Charli Klein, and Sarah Tillman for their help in editing this paper and the Department of Mathematics at Toronto Metropolitan University for its continued support throughout the process of this research. This research did not receive any specific grant from funding agencies in the public, commercial, or not-for-profit sectors.
\bibliography{bibliography}
\end{document}